\begin{document}

\newtheorem{theorem}{Theorem}
\newtheorem{proposition}[theorem]{Proposition}
\newtheorem{lemma}[theorem]{Lemma}
\newtheorem{corollary}[theorem]{Corollary}
\newtheorem{question}[theorem]{Question}
\newtheorem{remark}[theorem]{Remark}
\newtheorem{example}[theorem]{Example}
\newtheorem{conjecture}[theorem]{Conjecture}
\newtheorem{definition}[theorem]{Definition}
\newtheorem{correction}{Correction}

\def\mA{{\mathcal{A}}}

\def\A{{\mathbb{A}}}
\def\C{{\mathbb{C}}}
\def\H{{\mathbb{H}}}
\def\L{{\mathbb{L}}}
\def\P{{\mathbb{P}}}
\def\Q{{\mathbb{Q}}}
\def\R{{\mathbb{R}}}
\def\Z{{\mathbb{Z}}}

\def\Ch{{\rm Ch}}
\def\id{{\rm id}}
\def\sp{{\rm SP}}

\def\mC{{\mathcal{C}}}
\def\mP{{\mathcal{P}}}
\def\mZ{{\mathcal{Z}}}

\title{The   Euler number of a $\C^*$-invariant subvariety  in $\P^n$}
\author{Wenchuan Hu}
\keywords{Isolated Fixed Points, Multiplicative Group Action}

\address{
School of Mathematics\\
Sichuan University\\
Chengdu 610064 \\
P. R. China
}

\email{huwenchuan@gmail.com} 

\begin{abstract}
 In this note we show that the Euler number of
 a projective variety $\C^*$-equivariantly  embedded into a projective space $\P^n$ is bounded by
 $n+1$, as  conjectured by Carrell and Sommese.
\end{abstract}

\maketitle
\pagestyle{myheadings}
 \markright{The Euler number of an equivariant embedded variety }


\section{Introduction}

If $X$ is a smooth complex projective variety admitting $\C^*$-action, then the topological information of $X$
can be recovered from that of the fixed point set (see \cite{Frankel}, \cite{Bialynicki-Birula}, \cite{Carrell-Sommese1}, etc.).

The point is that there exists a Morse-Bott function on $X$ whose critical points are exactly the fixed components of the action.
It has been shown by Blanchard that any projective manifold $X$ admitting a $\C^*$-action with nonempty fixed point set admits
an equivariant projective embedding.  If $X\subset \P^n$ is $\C^*$-equivariantly embedding and if $X$ is contained in no $\C^*$-invariant
hyperplane of $\P^n$, then there is a  Morse-Bott function $f$ on $\P^n$ satisfying $f(\P^n)=f(X)$ (see \cite{Carrell-Sommese2}), but it is not
true that  $X$ meets every component of $(\P^n)^{\C^*}$. One of  such examples is the $d$-uple Veroese embedding of $\P^m\to \P^{(^{m+d}_{~d})-1}$.
A natural question is whether there is a clear relation between the Euler number of $X$ and $\P^n$.

In this note we show the following result holds.

\begin{theorem}\label{Th01}
Let $X$ be a smooth connected complex projective variety admitting a $\C^*$-action.
 If $X^{\C^*}$ is finite and $X$ is equivariantly
embedded in  $\P^n$, then $$\chi(X)\leq \chi(\P^n),$$
where $\chi$ denotes the Euler number.
\end{theorem}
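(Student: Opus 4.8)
The plan is to convert the statement into a count of fixed points and then run an induction on $n$. First I would use the standard fact that for any algebraic $\C^*$-action on a complex variety $V$ one has $\chi(V)=\chi(V^{\C^*})$ — this is exactly the mechanism behind the Bialynicki--Birula and Carrell--Sommese computations cited above. Applied to $X$ and to $\P^n$ this gives $\chi(X)=\#X^{\C^*}$ and $\chi(\P^n)=n+1$. Diagonalise the linear action, writing $\C^{n+1}=\bigoplus_{r=0}^{k}V_r$ for its distinct weights $b_0<\dots<b_k$; then $(\P^n)^{\C^*}=\bigsqcup_{r=0}^{k}L_r$ with $L_r=\P(V_r)$, so $n+1=\sum_r(\dim L_r+1)$, and since $X^{\C^*}=\bigsqcup_r(X\cap L_r)$ the theorem becomes
\[
\sum_{r=0}^{k}\#(X\cap L_r)\ \le\ \sum_{r=0}^{k}(\dim L_r+1).
\]

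The local tool I would isolate is the following. Put $Q_r=\P(V_r\oplus\dots\oplus V_k)$, a decreasing chain of $\C^*$-invariant linear subspaces with $Q_0=\P^n$. If $Y\subseteq\P^n$ is irreducible, $\C^*$-invariant, has finite fixed locus, and $\rho$ is the largest index with $Y\subseteq Q_\rho$, then $Y\cap L_\rho$ is a \emph{single} point $q$, $Y$ misses $L_s$ for $s<\rho$, and $Y\subseteq\P(\langle v_q\rangle\oplus V_{\rho+1}\oplus\dots\oplus V_k)$. Indeed, being irreducible, $Y$ has a unique dense Bialynicki--Birula cell; on that cell $\lim_{t\to0}t\cdot y$ is the fixed point the cell is attached to, while by maximality of $\rho$ the lowest nonvanishing weight block of a generic $y$ is $V_\rho$, so that limit equals $[\,y^{(\rho)}\,]$. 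Hence $[\,y^{(\rho)}\,]$ is a fixed point $q$ independent of the generic $y$, which forces $y^{(\rho)}\in\langle v_q\rangle$ for all $y\in Y$ and yields the three conclusions. Applying this to the irreducible components of the successive sections $X\cap Q_r$ gives, for every $r$,
\[
\#(X\cap L_r)\ \le\ \#\{\text{irreducible components of }X\cap Q_r\text{ not contained in }Q_{r+1}\},
\]
so the problem reduces to bounding by $n+1$ the total number of components appearing along the flag $X=X\cap Q_0\supseteq X\cap Q_1\supseteq\dots\supseteq X\cap Q_k$.

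For the induction I would argue on $n$. If $X$ lies in a $\C^*$-invariant hyperplane it is equivariantly embedded in $\P^{n-1}$ and we conclude by induction. Otherwise $X$ is nondegenerate; applying the lemma to $Y=X$ then forces $\dim V_0=1$, so $L_0$ is a single point $p_0$, and $p_0\in X$ because $p_0=\lim_{t\to0}t\cdot x$ for generic $x\in X$. Now $Q_1$ is a $\C^*$-invariant hyperplane, $X\cap Q_1$ is a hyperplane section of $X$ meeting $X^{\C^*}$ in $X^{\C^*}\setminus\{p_0\}$, so $\chi(X\cap Q_1)=\chi(X)-1$, and $X\cap Q_1$ is a $\C^*$-invariant subvariety of $\P^{n-1}=Q_1$ with finite fixed locus. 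If the desired bound were available for $X\cap Q_1\subseteq\P^{n-1}$, we would get $\chi(X)=\chi(X\cap Q_1)+1\le n+1$.

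The crux — and the step I expect to be the main obstacle — is exactly this last point. The section $X\cap Q_1$ is in general neither smooth nor irreducible, and the bound $\chi(Y)\le n+1$ genuinely fails for arbitrary reducible $\C^*$-invariant $Y$ with finite fixed locus (a union of $m$ distinct $\C^*$-invariant lines through one fixed point has $\chi=m+1$). Hence the induction cannot be carried in the class of smooth connected varieties alone: one must find the right intermediate class, stable under the sections by the $Q_r$'s but still excluding such degenerate unions, or, equivalently, bound the component count of the flag directly. The natural attack is to observe that each new component at level $r$ of the flag is a hyperplane section — inside one of the projective spaces $P_q=\P(\langle v_q\rangle\oplus V_r\oplus\dots\oplus V_k)$, $q\in L_{r-1}$, furnished by the lemma, where $Q_r$ has codimension one — of an irreducible invariant component $Z\subseteq X\cap Q_{r-1}$, and to control how many components such a section can acquire by using that these $Z$'s are iterated divisors descended from the smooth $X$ (hence Cohen--Macaulay, with limited degeneracy). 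Making this bookkeeping work so that the total stays $\le n+1$ is where the real difficulty lies.
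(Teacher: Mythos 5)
Your reduction to fixed-point counting, the weight-space decomposition $(\P^n)^{\C^*}=\bigsqcup_r L_r$, and your structural lemma (an irreducible invariant $Y$ with finite fixed locus meets its lowest weight space $L_\rho$ in at most one point $q$ and lies in $\P(\langle v_q\rangle\oplus V_{\rho+1}\oplus\cdots\oplus V_k)$) are all correct; the lemma is essentially the projective form of the paper's Lemma \ref{lemma2}. But the proof is not complete, and you say so yourself: what is missing is exactly the quantitative heart of the theorem, namely the bound $\#(X\cap L_r)\le\dim L_r+1$ for each weight space, equivalently the bound on the total number of irreducible components appearing along the flag $X\cap Q_0\supseteq X\cap Q_1\supseteq\cdots\supseteq X\cap Q_k$. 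Your own example of $m$ invariant lines through one fixed point shows this bound genuinely fails for reducible invariant subvarieties with finite fixed locus, so the irreducibility (or smoothness and connectedness) of $X$ must be used in an essential way at precisely this step, and your framework has not located where. Without that step the argument yields nothing beyond $\chi(X)=\#X^{\C^*}$.

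For comparison, the missing statement is the paper's Proposition \ref{Prop4}: $\#(X\cap F_1)\le\dim F_1+1$ for a fixed component $F_1$ of $(\P^n)^{\C^*}$. The paper attacks it by normalizing $\dim F_1+1$ of the intersection points to coordinate points, projecting $X$ repeatedly from fixed coordinate points down to a hypersurface $X_{m+2}\subset\P^{m+1}$ (checking that each projection preserves equivariance and finiteness of the fixed locus), using the intersection-dimension inequality $\dim(F_1\cap X_{m+2})\ge\dim F_1+\dim X_{m+2}-(m+1)$ to force $\dim F_1\le 1$, and finishing the one-dimensional case by a degree argument on invariant lines through an auxiliary fixed point; the theorem then follows by induction via the decomposition $\P^n=F_1^+\sqcup\P^{n-\dim F_1-1}$. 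So your plan is structurally parallel to the paper's (same decomposition of the fixed locus, same per-component target inequality) but stops exactly where the paper's Proposition \ref{Prop4} begins. Your concern that the induction leaves the class of smooth connected varieties is also a real one --- the sections $X\cap Q_r$ (and, in the paper's induction, $X\cap\P^m$) are in general neither smooth nor irreducible --- so any completion must either prove the per-component bound for every $F_i$ directly from properties of $X$ itself, or exhibit an intermediate class of varieties that is closed under the relevant sections and still excludes the line-bouquet counterexamples.
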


This was conjectured by Carrell and Sommese \cite[p.572]{Carrell-Sommese2}.
We also show by examples that if $X^{\C^*}$ is not isolated or $\P^n$ is replaced by a complex projective variety $M$, then the conclusion in Theorem \ref{Th01} fails.
\begin{remark}
Let $X$ be given in Theorem \ref{Th01}. Then  the lower bound of $X$ is  $\dim_{\C} X+1$ since the odd Betti numbers of $X$ vanishes and all the even Betti numbers are positive. The equality of the lower bound is reached for complex projective spaces.
\end{remark}

\section{Equivariant embedded into projective spaces}
Let $\P^n$ be the complex projective space of dimension $n$ and let $X$ be a complex projective variety.
A $\C^*$-action  on a complex projective variety $X$ means that there is
a holomorphic map $\mu:\C^*\times X\to X$ such that $\mu(1,x)=x$ and $\mu(t_1t_2,x)=\mu(t_1,\mu(t_2,x))$ for all $t_1,t_2\in\C^*$ and $x\in X$.
It is well known that any holomorphic action of $\C^*$ on $\P^n$ arise through a 1-parameter subgroup
$\lambda:\C^*\to \P GL(n,\C)$. Hence up to a projective transformation, a $\C^*$
action on $\P^n$ is of the form
\begin{equation}\label{eqn1}
\lambda\cdot [z_0:z_1:,...,z_n]=[\lambda^{a_0}z_0:\lambda^{a_1}z_1:,...,\lambda^{a_n}z_n],
\end{equation}
where $a_0,a_1,...,a_n$ are integers satisfying $a_0\leq a_1\leq ...\leq a_n$ (cf. \cite{Carrell}).

Suppose that $X$ is embedded equivariantly in $\P^n$, where the action of $\C^*$ on $\P^n$ is given
by Equation \eqref{eqn1}. Let $X^{\C^*}$ be the set of fixed points under the action of $\C^*$.

\begin{lemma}
One has $X^{\C^*}\subset (\P^n)^{\C^*}$.
\end{lemma}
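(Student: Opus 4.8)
The plan is to read off the statement directly from what it means for $X$ to be embedded equivariantly. First I would make explicit the hypothesis: writing $\iota\colon X\hookrightarrow\P^n$ for the embedding, equivariance means that $\iota$ intertwines the action $\mu$ of $\C^*$ on $X$ with the action of $\C^*$ on $\P^n$ given by Equation \eqref{eqn1}, i.e.
\begin{equation*}
\iota(\mu(t,x))=\lambda(t)\cdot\iota(x)\qquad\text{for all }t\in\C^*,\ x\in X.
\end{equation*}
This is the normalized form of the action, so no projective transformation is lost in passing to it.

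Next I would take an arbitrary fixed point $x\in X^{\C^*}$, so that $\mu(t,x)=x$ for every $t\in\C^*$. Applying $\iota$ and using the displayed equivariance relation gives $\lambda(t)\cdot\iota(x)=\iota(\mu(t,x))=\iota(x)$ for all $t$, hence $\iota(x)\in(\P^n)^{\C^*}$. Identifying $X$ with its image under the embedding, this is precisely the assertion $x\in(\P^n)^{\C^*}$, and since $x$ was arbitrary we get $X^{\C^*}\subset(\P^n)^{\C^*}$.

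I do not expect any genuine obstacle: the inclusion of fixed loci is a formal consequence of equivariance and holds for any $\C^*$-equivariant morphism, the injectivity of $\iota$ not even being needed. The only point that requires a word of care is that "equivariant embedding" is used here in the strong sense of a morphism commuting with the two actions after the normalization \eqref{eqn1}, rather than merely that the image $\iota(X)$ be a $\C^*$-invariant subset; once this is granted the argument is immediate. It is worth noting that this lemma is exactly what lets one replace the (possibly complicated) action on $X$ by the explicit linear action on $\P^n$ when counting contributions of fixed points, which is the reason it is isolated here.
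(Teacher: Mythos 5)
Your proposal is correct and is essentially the same argument as the paper's: both take a fixed point $x\in X^{\C^*}$ and use the equivariance of the embedding to conclude that its image is fixed under the action on $\P^n$. Your version just makes the intertwining relation explicit, which the paper leaves implicit.
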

\begin{proof}
It follows from the assumption that $X$ is equivariantly embedded in $\P^n$. If $x\in X$ is a point
such that $\lambda\cdot x:\mu(\lambda, x)=x$ for all $\lambda\in \C^*$, then $x$ is a fixed point
of the action and so $x\in (\P^n)^{\C^*}$.
\end{proof}

By the same reason, if $X$ and $M$ are smooth projective varieties admitting $\C^*$-actions
 and $X$ is $\C^*$ equivariantly embedded in $M$, then we have the following result.

\begin{corollary}
Let $X$ be $\C^*$ equivariantly embedded in $M$. If  $M^{\C^*}$ is  finite, then $\chi(X)\leq \chi(M)$.
\end{corollary}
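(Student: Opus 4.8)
The plan is to reduce the inequality to a comparison of the (finite) fixed-point sets on the two sides. First I would note that the proof of the Lemma above applies verbatim with $\P^n$ replaced by $M$: since $X$ is equivariantly embedded in $M$, the two actions agree on $X$, so a point $x\in X$ is fixed by the action on $X$ if and only if it is fixed by the action on $M$. Hence $X^{\C^*}=X\cap M^{\C^*}\subseteq M^{\C^*}$. In particular, once $M^{\C^*}$ is assumed finite, $X^{\C^*}$ is finite as well.

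Next I would invoke the standard fact that for a smooth complex projective variety $Y$ carrying a $\C^*$-action with \emph{isolated} fixed points one has $\chi(Y)=\#Y^{\C^*}$. This follows from the Bialynicki--Birula decomposition: $Y$ is partitioned into locally closed cells, each isomorphic to an affine space and indexed by a point of $Y^{\C^*}$, so every fixed point contributes exactly $1$ to the Euler number. (Equivalently, one may restrict the action to the maximal compact $S^1\subset\C^*$, whose fixed locus coincides with $Y^{\C^*}$, and use $\chi(Y)=\chi(Y^{S^1})$.) Applying this to both $X$ and $M$ gives $\chi(X)=\#X^{\C^*}$ and $\chi(M)=\#M^{\C^*}$.

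Combining the two steps yields $\chi(X)=\#X^{\C^*}\leq\#M^{\C^*}=\chi(M)$, as claimed.

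I do not expect a serious obstacle here; the only point needing care is the justification (and correct citation) of $\chi(Y)=\#Y^{\C^*}$ in the isolated-fixed-point case, and making sure it is legitimate to apply it to $X$ as well — which it is, precisely because the equivariant embedding forces $X^{\C^*}$ to be finite once $M^{\C^*}$ is. It is worth emphasizing that this Corollary is genuinely easier than Theorem \ref{Th01}: there the target $\P^n$ carries a positive-dimensional fixed locus, so the fixed points of $X$ cannot simply be counted against those of $\P^n$, whereas the extra hypothesis that $M^{\C^*}$ be finite is exactly what makes the counting argument work.
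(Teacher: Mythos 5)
Your proposal is correct and follows essentially the same route as the paper: establish $X^{\C^*}\subseteq M^{\C^*}$ from the equivariance of the embedding, then apply the fixed point theorem $\chi(Y)=\#Y^{\C^*}$ to both $X$ and $M$ and compare cardinalities. The only difference is that you spell out the justification of the fixed point count (via the Bialynicki--Birula decomposition or the restriction to $S^1$), which the paper leaves as a citation-level remark.
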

\begin{proof}
Since $X^{\C^*}\subset M^{\C^*}$, we have $\# X^{\C^*}\leq \# M^{\C^*}$. By the fixed point theorem,
we have $\chi(X)=\# X^{\C^*}$ and $\chi(M)=\# M^{\C^*}$. This completes the proof of the Corollary.
\end{proof}

\begin{lemma}\label{lemma2}
Let $\mu:\C^*\times \C^n\to \C^n$ be the action given by $\mu(\lambda, (z_1,...,z_n))=(z_1,...,z_k,\lambda^{a_{k+1}}z_{k+1},...,\lambda^{a_n}z_n))$,
where $k\geq 1$ and $a_i\neq 0$ for all $i=k+1,...,n$. Suppose that $U\subset \C^n$ is  an irreducible affine variety equivariantly embedded in $\C^n$.
Then  the fixed point set $U^{\C^*}$ is irreducible. In particular, if $U^{\C^*}$ is nonempty and finite, then $U^{\C^*}$ contains exact one point.
\end{lemma}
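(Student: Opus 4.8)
The plan is to identify the fixed point set $U^{\C^*}$ explicitly inside $\C^n$ and then argue that it is cut out of $U$ by a linear subspace intersected with $U$, in such a way that irreducibility is inherited. Observe first that the fixed locus of the ambient action $\mu$ on $\C^n$ is exactly the linear subspace $L = \{z_{k+1} = \cdots = z_n = 0\} \cong \C^k$, since each $a_i \neq 0$ for $i > k$ forces $\lambda^{a_i} z_i = z_i$ for all $\lambda$ to imply $z_i = 0$. Hence $U^{\C^*} = U \cap L$ as sets, so the real content is to show $U \cap L$ is irreducible.

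The key step is to exhibit $U \cap L$ as the image of $U$ under a $\C^*$-equivariant retraction, or more precisely as a limit. Consider the morphism $r\colon U \to \C^n$ defined, where it exists, by $r(z) = \lim_{\lambda \to 0} \mu(\lambda, z)$; concretely this sends $(z_1,\dots,z_n)$ to $(z_1,\dots,z_k, 0,\dots,0)$ on those coordinates with positive weight, but the weights $a_i$ need not all be positive, so I would instead split $\{k+1,\dots,n\}$ into the indices with $a_i>0$ and those with $a_i<0$ and use the limit $\lambda\to 0$ on one part and $\lambda\to\infty$ on the other — or, cleaner, simply project $p\colon \C^n \to L$ onto the first $k$ coordinates. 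The point is that $p$ restricted to $U$ is a morphism whose image $p(U)$ is an irreducible constructible set (image of an irreducible variety), hence its closure $\overline{p(U)}$ is irreducible. I would then show $U \cap L = \overline{p(U)}$: the inclusion $U\cap L \subseteq p(U)$ is clear since $p$ fixes $L$ pointwise, and for the reverse inclusion one uses that for any $z \in U$ the whole orbit closure $\overline{\C^*\cdot z}$ lies in $U$ (as $U$ is closed and $\C^*$-invariant) and its boundary limit points are fixed points lying in $L$; chasing which coordinates survive shows the relevant limit point has first $k$ coordinates equal to $p(z)$, giving $p(z) \in U \cap L$. Combined with closedness of $U\cap L$, this yields $U\cap L = \overline{p(U)}$, which is irreducible.

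I expect the main obstacle to be handling the mixed-sign weights cleanly: when some $a_i > 0$ and some $a_j < 0$ there is no single one-sided limit that lands in $L$, so one must take a limit along $\lambda \to 0$ to kill the positive-weight coordinates and separately $\lambda \to \infty$ to kill the negative-weight ones, and then argue that both resulting fixed points have the same first $k$ coordinates, namely $p(z)$. This requires knowing that these limits actually exist in $U$ — which follows from properness considerations after compactifying, or from the fact that $U$ being affine and $\C^*$-invariant makes $\C[U]$ a graded ring, so the closure of any orbit in $U$ is again in $U$ and is a union of at most the orbit plus two fixed points. Once the two fixed points $r_0(z), r_\infty(z)$ are shown to have a common first-$k$-coordinate block equal to $p(z)$, the set-theoretic identity $U^{\C^*} = U\cap L = \overline{p(U)}$ follows and irreducibility is immediate. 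The final sentence of the lemma is then a triviality: an irreducible variety that is finite and nonempty is a single reduced point.
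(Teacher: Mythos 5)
In the case where the nonzero weights $a_{k+1},\dots,a_n$ all have the same sign, your argument is correct and is essentially the paper's: the projection $p$ onto the first $k$ coordinates coincides with the limit map $z\mapsto\lim_{\lambda\to 0}\lambda\cdot z$ (or $\lambda\to\infty$ if all weights are negative), so $p(U)\subseteq U$ because $U$ is closed and $\C^*$-invariant, hence $U^{\C^*}=U\cap L=p(U)$ is the image of an irreducible variety under a morphism and therefore irreducible.

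The gap is exactly where you suspected it, in the mixed-sign case, and your proposed fix does not work. When some $a_i>0$ and some $a_j<0$, an orbit $\C^*\cdot z$ can already be closed in $\C^n$, containing no fixed point whatsoever, and $p(z)$ need not lie in $U$ at all; there are then no ``resulting fixed points $r_0(z), r_\infty(z)$'' to compare. Concretely, take $n=3$, $k=1$, weights $(a_2,a_3)=(1,-1)$, and $U=\{(x,y,z): yz=x\}$. This is irreducible and invariant, the orbit of $(1,1,1)$ is the closed curve $\{(1,t,t^{-1})\}$, and $U^{\C^*}=U\cap\{y=z=0\}=\{(0,0,0)\}$ while $p(U)=\C$, so your proposed identity $U^{\C^*}=\overline{p(U)}$ fails. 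Worse, the statement itself is false with mixed signs: $U=\{(x,y,z): yz=x^2-x\}$ is an irreducible invariant surface (the defining polynomial is linear in $y$ with coprime coefficients $z$ and $x-x^2$, hence irreducible) whose fixed locus consists of the two points $(0,0,0)$ and $(1,0,0)$. So no argument can close this gap; the lemma needs the tacit hypothesis --- which the paper's own proof also uses silently when it assumes $\lim_{\lambda\to 0}\lambda\cdot z$ exists for every $z\in\C^n$ --- that the nonzero weights all have the same sign. Your final reduction (irreducible plus finite plus nonempty implies a single point) is of course fine once irreducibility is established.
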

\begin{proof}
It follows from a direct calculation that the fixed point set of the $\C^*$ action on $\C^n$ is $\C^k\subset \C^n$, spanned by the first
$k$-vector $e_1,...,e_k$, where $e_i=(0,..0,1,0,...,0)$ and $1$ is at the $i$-th position. This gives us an identification
$\C^n\cong \C^k\times \C^{n-k}$, where $\C^k$ is identified with the fixed point set of the action and $\C^{n-k}$ is identified with
$(\C^n)_p:=\{z\in \C^n|\lim_{\lambda\to 0}\lambda \cdot (z_1,..,z_n)=(z_1,...,z_k,0,...,0)\}$,
where $p=(z_1,...,z_k,0,...,0)\in (\C^n)^{\C^*}\cong \C^k$.  We denote by $\pi:\C^n\to \C^k $ the composed morphism
$\C^n\cong \C^k\times \C^{n-k}\stackrel{pr_1}{\longrightarrow} \C^k $.

Since $U$ is equivariantly embedded in $\C^n$, we have $U^{\C^*}\subset (\C^n)^{\C^*}$. At each $p\in U^{\C^*}$, the set
$$U_p=\{z\in U|\lim_{\lambda\to 0}\lambda \cdot z=p\}
$$
is contained in $(\C^n)_p$.

The restriction of the morphism $\pi$ on $U$ gives us a surjective morphism $\pi_{|U}:U\to U^{\C^*}$ whose fiber
is $U_p$. By assumption, $U$ is irreducible, we obtain that $U^{\C^*}$ is irreducible
 since a surjective morphism between quasi-projective varieties preserves irreducibility.
\end{proof}

Set $p_0=[1:0:...:0]$, $p_1=[0:1:...:0]$, ..., $p_n=[0:0:...:1]$. It is clear to see
that $p_0,p_1,...,p_n$ are fixed points of the action since
$$\lambda\cdot p_i=[0:...0:\lambda^{a_i}:0:...:0]=p_i.$$

Let $F_1,...,F_r$ be the connected components of $(\P^n)^{\C^*}$. A general result
says that each $F_i$ is a smooth projective subvariety (see \cite{Frankel}, \cite{Bialynicki-Birula}).
Moreover, in this case, each $F_i$ is a linear projective subspace of $\P^n$. Let $F_i^+=\{x\in \P^n|\lim_{\lambda \to 0}\lambda\cdot x\in F_i\}$,
then  $F_i^+$ is a vector bundle over $F_i$(see \cite{Bialynicki-Birula}).

Fix a hyperplane $\P^{m}\subset\P^{m+1}$ and a point $p\in \P^{m+1}-\P^m$.
For an algebraic set $V\subset \P^m$, the \emph{algebraic suspension} of $V$ with
vertex $p$ (i.e., cone over $V$)
is the set
$$\Sigma_p V:=\cup\{ l~|~ l \hbox{ is a projective line through $\P^0$ and intersects $V$}\}.
$$

\begin{proposition}\label{Prop4}
Let $X$ be a smooth connected projective variety  $\C^*$ equivariantly embedded in $\P^{n}$ and assume that
the $\C^*$ action on $\P^n$ is of the form
$$
\lambda\cdot [z_0:z_1:,...,z_n]=[\lambda^{a_0}z_0:\lambda^{a_1}z_1:,...,\lambda^{a_n}z_n],
$$
where $a_0,a_1,...,a_n$ are integers satisfying $a_0\leq a_1\leq ...\leq a_n$.
Suppose that $X^{\C^*}$ is finite and let $F_1,...,F_r$ be the connected component of $(\P^n)^{\C^*}$ for all $1\leq i\leq r$.
Then  $\#(X\cap F_1)\leq \dim F_1+1$, where $F_1$ is spanned by $p_0,...,p_{k}$.
\end{proposition}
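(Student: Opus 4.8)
The plan is to cover $F_1$ by standard affine charts of $\P^n$ and to apply Lemma \ref{lemma2} on each of them. First observe that, since $a_0\le a_1\le\cdots\le a_n$ and $F_1$ is the linear span of $p_0,\dots,p_k$, we must have $a_0=a_1=\cdots=a_k$ (with $a_k<a_{k+1}$ when $k<n$); thus $F_1$ is the fixed component of minimal weight and $\dim F_1=k$. If $k=0$ then $F_1=\{p_0\}$ and the assertion $\#(X\cap F_1)\le 1$ is trivial, so from now on assume $k\ge 1$. Let $U_i=\{z_i\ne 0\}\cong\C^n$ be the $i$-th standard affine chart; since $F_1=\{z_{k+1}=\cdots=z_n=0\}$, the charts $U_0,\dots,U_k$ cover $F_1$.

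Next I would check that Lemma \ref{lemma2} applies on each $U_i$ with $0\le i\le k$. In the affine coordinates $w_j=z_j/z_i$ on $U_i$, the $\C^*$-action is $\lambda\cdot(w_j)=(\lambda^{a_j-a_i}w_j)$, and since $a_j-a_i=0$ for $0\le j\le k$ while $a_j-a_i=a_j-a_0>0$ for $j>k$, this is exactly an action of the form in Lemma \ref{lemma2}, with the $k\ (\ge 1)$ coordinates $w_j$, $j\in\{0,\dots,k\}\setminus\{i\}$, fixed and the remaining $n-k$ weights nonzero. Now $X\cap U_i$ is closed in the affine space $U_i$ (as $X$ is closed in $\P^n$), hence affine; it is $\C^*$-invariant since both $X$ and $U_i$ are; and, when nonempty, it is irreducible, being a nonempty open subset of the irreducible variety $X$ (here smoothness and connectedness of $X$ give irreducibility). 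Its fixed locus is $(X\cap U_i)^{\C^*}=X^{\C^*}\cap U_i$, which is finite by hypothesis, so Lemma \ref{lemma2} forces $\#\big(X^{\C^*}\cap U_i\big)\le 1$ for every $i=0,\dots,k$.

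To conclude, note that every point of $F_1$ is a fixed point, so $X\cap F_1\subseteq X^{\C^*}$, and since the $U_i$ ($0\le i\le k$) cover $F_1$,
$$\#(X\cap F_1)\ \le\ \#\Big(\bigcup_{i=0}^{k}\big(X^{\C^*}\cap U_i\big)\Big)\ \le\ \sum_{i=0}^{k}\#\big(X^{\C^*}\cap U_i\big)\ \le\ k+1\ =\ \dim F_1+1 .$$
The proof itself is short; the only real content is the reduction to Lemma \ref{lemma2}, and the feature that makes it go through is that $F_1$ carries the \emph{minimal} weight $a_0$, so that on each chart $U_i$ containing a coordinate point of $F_1$ all the normal weights $a_j-a_0$ ($j>k$) are strictly positive — precisely the situation of Lemma \ref{lemma2}. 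The point demanding a little care is that one cannot work in a single chart: to catch every coordinate point $p_0,\dots,p_k$ of $F_1$ one genuinely needs all $k+1$ charts, and it is the ``at most one fixed point per chart'' bound, summed over the $k+1$ of them, that yields exactly $\dim F_1+1$.
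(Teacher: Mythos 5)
Your proof is correct, but it takes a genuinely different route from the paper's. The paper argues by contradiction: assuming $\#(X\cap F_1)>k+1$, it applies a projective transformation placing spanning points of $X\cap F_1$ at $p_0,\dots,p_k$, projects repeatedly away from $p_n,\dots,p_{m+2}$ to obtain an invariant hypersurface $X_{m+2}\subset\P^{m+1}$, and combines the projective dimension theorem $\dim(F_1\cap X_{m+2})\ge k-1$ with finiteness of the fixed locus to force $k\le 1$, finishing the case $k=1$ by a degree count on the invariant lines $l_{q_0p}$ (with a separate reduction when $X\cap F_1$ does not span $F_1$); Lemma~\ref{lemma2} is never invoked there. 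Your argument instead covers $F_1$ by the charts $U_0,\dots,U_k$ and applies Lemma~\ref{lemma2} once per chart; this is shorter and sidesteps the delicate points of the projection argument (the centers of projection must avoid $X$, the transformation $\sigma$ must commute with the action, the general-position reduction). You also isolate exactly the hypothesis that makes the lemma applicable: since $F_1$ carries the minimal weight, every normal weight $a_j-a_i$ with $j>k$ is strictly positive, so the limit map $z\mapsto\lim_{\lambda\to 0}\lambda\cdot z$ used in the proof of Lemma~\ref{lemma2} is defined on all of $X\cap U_i$ and lands back in $X\cap U_i$ --- a point worth making explicit, since the lemma as stated only assumes the weights are nonzero while its proof needs the limits to exist. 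As in the paper, the bound is thereby established only for the minimal-weight component $F_1$, which is all that the induction in Theorem~\ref{Th01} requires.
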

\begin{proof}

Suppose $\#(X\cap F_1)> \dim F_1+1$  where $\dim F_1=k$.

First we assume that the points $X\cap F_1$ are in general positions in $F_1$ in the sense that $F_1$ is linearly spanned by
the points $X\cap F_1$.
Then one can make a suitable projective transformation $\sigma$ of $\P^n$ such that  $\sigma$ keeps $F_i$ fixed for $2\leq i\leq r$ and keeps $F_1$ invariant. Moreover, $\sigma$ moves points in $X\cap F_1$ to $p_0,...,p_{k}$.
Clearly, $\sigma(X)$ admits a $\C^*$-action and it is $\C^*$-equivariantly embedded into $\P^n$.  Since $\sigma(X)$ is isomorphic to $X$, $\chi(\sigma(X))=\chi(X)$. Hence we can assume that $p_0,...,p_{k}$ are in $X\cap F_1$.

Now we consider the projection  $pr_n:\P^n-p_n\to \P^{n-1} $ away from the point $p_n$. Set $X_n:=pr_n(X)$.

\textbf{Claim}: The image $X_n$ admits $\C^*$-action with finite fixed points and the embedding $X_n\subset\P^{n-1}$ is $\C^*$-equivariant.
\begin{proof}[Proof of the Claim]
For $\lambda\in \C^*$ and $p\in X\subset \P^n$, we write $p=[z_0:...:z_n]$ and $\lambda\cdot p=[\lambda^{a_0}z_0:...:\lambda^{a_n}z_n]\in X$
since $X$ is $\C^*$-invariant.
Then
$$
\begin{array}{ccl}
pr_n(\lambda\cdot p)
&=&pr_n([\lambda^{a_0}z_0:...:\lambda^{a_{n}}z_{n}])\\
&=&[\lambda^{a_0}z_0:...:\lambda^{a_{n-1}}z_{n-1}]\\
&=&\lambda\cdot ([z_0:...:z_{n-1}])\\
&=&\lambda\cdot pr_n([z_0:...:z_n])\\
&=&\lambda\cdot pr_n(p),
\end{array}
$$
i.e., the projection $pr_n$ commutes with the $\C^*$-action.  This implies that $X_n$ admits $\C^*$-action induced from $\P^{n-1}$. That is,
$X_n$ is $\C^*$-equivariantly embedded in $\P^{n-1}$.

For each $q\in X_n^{\C^*}$, the projective line $l_{qp_n}:=\overline{qp_n}$  connecting $q$ to $p_n$ is $\C^*$-invariant. Since $X$ is also
$\C^*$-invariant, the intersection $l_{qp_n}\cap X$ is $\C^*$-invariant. So if $l_{qp_n}$ does not lie in $X$, then $l_{qp_n}\cap X$ is
a finite point set and each point in $l_{qp_n}\cap X$ is a fixed point on $X$.  This shows that there is a map $f:X_n^{\C^*}\to S(X^{\C^*})$, where
$S(X^{\C^*})$ denotes the set of all subsets of $X^{\C^*}$.
 Moreover, if $q,q'\in X_n$ and $q\neq q'$, then $l_{qp_n}\cap l_{q'p_n}=p_n$. That is, $(l_{qp_n}-p_n)\cap (l_{q'p_n}-p_n)=\emptyset$.
 This shows that the  map $f$ is injective. So the set $\{q\in X_n^{\C^*}|l_{qp_n}\nsubseteq X \}$ is finite
since $X^{\C^*}$ is.
If $l_{qp_n}$ does lie in $X$, then  $q\in X$ and so $q\in X^{\C^*}$.
Since $X^{\C^*}$ is a finite set, $\{q\in X_n^{\C^*}|l_{qp_n}\subset X\}\subset X^{\C^*}$ is finite. In summary, $X_n^{\C^*}$ is finite.
This completes the proof of the Claim.
\end{proof}

Now we can keep projecting away from points $p_{n-1},...,p_{m+2}$, where $m=\dim X$. Let $X_{m+2}$ be the image of $X$ after  projections away from
$p_n,p_{n-1},...,p_{m+2}$. From the Claim, we know $X_{m+2}$ admits $\C^*$-action with finite fixed points and
the embedding $X_{m+2}\subset\P^{m+1}$ is $\C^*$-equivariant irreducible hypersuface. Clearly,  $p_j\in X_{m+2}$ for $0\leq j\leq k$ since
$p_j$ is fixed under the projections for $0\leq j\leq k$.

On one hand, since $F_1$ is fixed under the $\C^{*}$-action and
$X_{m+2}$ is $\C^*$-invariant, $F_1\cap X_{m+2}$ is fixed under $\C^*$-action. So we have $F_1\cap X_{m+2}\subset X_{m+2}^{\C^*}$.
It has been shown in the Claim that $X_{m+2}^{\C^*}$ is finite.
On the other hand, one has $0=\dim(F_1\cap X_{m+2})\geq \dim F_1+\dim X_{m+2}-(m+1)=k-1$ since $\dim F_1=k$.
Therefore, we get $k\leq 1$.

If $k=0$, then  $\dim F_1=0$ and the Proposition is trivial.

 If $k=1$, then $F_1$ is a projective line .
For any $l\subset \P^{m+1}$, if $l\nsubseteq X_{m+2}$,
then $l\cdot X_{m+2}=\deg X_{m+2}$. Now we consider the case that $F_1$ is a projective line in $\P^n$.
Since $F_1$ a $\C^*$-fixed, $F_1\nsubseteq X_{m+2}$.

Suppose the conclusion in the Proposition fails, that is, $F_1\cap X_{m+2}$ contains at least $\dim F_1+2=3$ points.
Let $q_0\in (\P^{m+1})^{\C^*}-F_1$ be a  point whose existence is clear unless $(\P^{m+1})^{\C^*}=F_1$. This is not
possible since $(\P^n)^{\C^*}$ is not connected unless the action of $\C^*$ is trivial.


For any $p\in F_1$, $l_{q_0p}$ is $\C^*$ invariant. Moreover, by the reason of degree,
 $l_{q_0p}\cap X_{m+2}$ contains at least one point $p'\in X_{m+2}^{\C^*}$ but $p'\neq q_0$ or $p$.
 Since   $l_{q_0p}\cap l_{q_0q}=q_0$ for any pair $p,q\in F_1$, this contradicts to the fact that $X_{m+2}^{\C^*}$ is finite.
 The contradiction comes from the hypothesis that $\#(X\cap F_1)>\dim F_1+1$. Therefore $\#(X\cap F_1)\leq \dim F_1+1$.

Now we consider the case that $X\cap F_1$ is not of  general positions.
Since $X^{\C^*}\subset (\P^n)^{\C^*}$ and $X\cap F_1\subset F_1$, we denote by $F_1'\subset F_1$ a linear projective subspace of minimal dimension
containing $X\cap F_1$. Set $k'=\dim F_1'$.
 Then the above arguments are applied to  $F_1'$, we get $\#(X\cap F_1')\leq \dim F_1'+1$. Since $\#(X\cap F_1')=\#(X\cap F_1)$ and
 $\dim F_1'< \dim F_1$, we obtain that $\#(X\cap F_1)\leq \dim F_1+1$. This completes the proof of proposition.
\end{proof}

Now we will show Theorem \ref{Th01} below.
\begin{proof}[Proof of Theorem \ref{Th01}]
The Lefschetz fixed point theorem says that if $Y$ is a complex projective variety admitting a $\C^*$ action with the
fixed point set $Y^{\C^*}$, then one has $\chi(Y)=\chi(Y^{\C^*})$.

Since $X$ is a smooth connected projective variety  $\C^*$ equivariantly embedded in $\P^{n}$
such that $X^{\C^*}$ is finite and let $F_1,...,F_r$ be the connected component of $(\P^n)^{\C^*}$.
From Proposition \ref{Prop4}, we have
$\#(X\cap F_1)\leq \dim F_1+1$. Since $F_1^+$ is an algebraic vector bundle over $F_1$, which is a linear $k$-plane in $\P^n$, one
has $\P^n-F_1^+\cong \P^m$, where $m=n-k-1$. Note that $F_2,...,F_r$ are the fixed component of the induced $\C^*$ action on $\P^m$.
By induction hypothesis, we have $\chi(X\cap \P^m)\leq m+1=n-k$.
Since $X^{\C^*}=\cup_{i=1}^r X\cap F_i$, we get
$\chi(X)=\chi(X\cap F_1^+)+\chi(X\cap \P^m)\leq (k+1)+(n-k)=n+1$ by Proposition \ref{Prop4} and the induction hypothesis.
That is, we have $\chi(X)\leq n+1$.
\end{proof}

\begin{remark}
From the proof of Theorem \ref{Th01}, we observe that the smoothness  of $X$ can be replaced by the
irreducibility of $X$.
\end{remark}

\section{Examples}

In this section we will deal with examples. For some examples, the assumption in Theorem \ref{Th01} are satisfied, while
for others the assumptions are not satisfied.

\begin{example}[Segre embedding]
Consider the Segre embedding
$$s:\P^m\times\P^n\to \P^{(m+1)(n+1)-1}$$
$$([x_0:...:x_m],[y_0:...:y_n])\mapsto [x_0y_0:...:x_iy_j:...:x_my_n].$$
Let $X:=s(\P^m\times \P^n)$ be the image of $s$.

The $\C^*$-action on $\P^m\times\P^n$ is given by $\lambda,([x_0:...:x_m],[y_0:...:y_n]) \mapsto([\lambda^{a_0}x_0:...:\lambda^{a_m}x_m],[\lambda^{b_0}y_0:...:\lambda^{b_n}y_n])$, where $a_i$ are different from each for all
$0\leq i\leq m$ and $b_j$ are different from each for all
$0\leq j\leq n$.  The $\C^*$-action on $\P^{(m+1)(n+1)-1}$ is given by $(\lambda, [z_{ij}])\mapsto [\lambda^{a_ib_j}z_{ij}]$.
In this case $X$ admits a $\C^*$-action  with isolated fixed point. Clearly, $\chi(X)=\chi(\P^m\times\P^n)=(m+1)(n+1)=\chi(\P^{(m+1)(n+1)-1})$.
\end{example}

\begin{example}[Veronese embedding]
Consider the Veronese embedding
$$v:\P^n \to\P^{(^{n+1}_{~d})-1}$$
$$([x_0:...:x_d])\mapsto [x_0^d:...:x_0^{i_0}\cdots x_n^{i_n}:...:x_n^d],$$
where $i_0+...+i_n=d$.
The $\C^*$-action on $\P^{(^{n+1}_{~d})-1}$ is given by $(\lambda, [z_I])\mapsto [\lambda^{a_I} z_I]$, where $I=(i_0,...,i_n)$ is
the multiple index. If $a_I$ are different from each other for $I=(d,0,...,0),(0,d,...,0),...,(0,...,0,d)$, then the induced $\C^*$
action on $X:=v(\P^n)$ is $\C^*$-equivariantly embedded in $\P^{(^{n+1}_{~d})-1}$. Moreover, the fixed point set $X^{\C^*}$ is finite.
Clearly, $\chi(X)=\chi(\P^n)=n+1<\chi(\P^{(^{n+1}_{~d})-1})=(^{n+1}_{~d})$.
\end{example}

\begin{example}[Pl\"{u}cker embedding]

The Pl\"{u}cker embedding is the map $\iota$ defined by

    \begin{align*} \iota \colon \mathbf{G}(k, \C^n) &{}\rightarrow \P(\wedge^k \C^n)\\ \operatorname{span}( v_1, \ldots, v_k ) &{}\mapsto \C( v_1 \wedge \cdots \wedge v_k )， \end{align*}
where $\mathbf{G}(k, \C^n)$ is the Grassmannian, i.e., the space of all $k$-dimensional subspaces of the $n$-dimensional complex vector space $\C^n$.
Let $\C^*$ act on $\C^n$ is given by $(\lambda, (x_1,...,x_n))\mapsto (\lambda x_1,...,\lambda^n x_n)$ and we simply denote it $\lambda\cdot x$, where
$x=(x_1,...,x_n)\in \C^n$.

The $\C^*$-action on $\P(\wedge^k \C^n)$ is given by $(\lambda, \C( v_1 \wedge \cdots \wedge v_k))\mapsto \C( \lambda \cdot v_1 \wedge \cdots \wedge \lambda \cdot v_k))$. The fixed point set of this action on $\P(\wedge^k \C^n)$ consists of $k$-dimensional coordinate planes, i.e.,$k$-planes spanned
by $e_i=(0,...,0,1,0,...,0)$, where $1$ is at the $i$-th position. The number of such planes in $\P(\wedge^k \C^n)$  is exactly $(^{n+1}_{k+1})$. Moveover, all of such planes lie in $\mathbf{G}(k, \C^n)$. Therefore, $\chi(\mathbf{G}(k, \C^n) )=(^{n+1}_{k+1})=\chi(\P(\wedge^k \C^n))$.

\end{example}

The following example shows that the finiteness of $X^{\C^*}$  is necessary.
\begin{example}
Let $Y\subset \P^3$ be a smooth surface of degree $d\geq 3$. Consider the Segre embedding
$$
S:\P^3\times\P^1\to \P^7, $$
$$([z_0:...:z_3],[s:t])\mapsto [sz_0:...:sz_3:tz_0:...:tz_3].
$$

Let $X$ be the image of $Y\times \P^1$ under the  Segre embedding and let the $\C^*$ action $\phi_{\lambda}$
on $\P^7$ is given by
$$
\Phi:\C^*\times \P^7\to \P^7, (\lambda,[y_0:...:y_7])\mapsto [y_0:...:y_3:\lambda y_4:...:\lambda y_7],
$$
where $\phi_{\lambda}(-)=\Phi(\lambda,-)$.
Then $\chi(X)>\chi(\P^7)$.
\end{example}

\begin{proof}
The fixed point set $(\P^7)^{\C^*}$ of the action $\phi$ is  $F_0\cup F_1$, where
$$F_0=\{(y_0=y_1=y_2=y_3=0) \}$$ and
$$F_1=\{(y_4=y_5=y_6=y_7=0)\},$$ each
of them is isomorphic to the projective space $\P^3$.
Let $\Psi$ be the $\C^*$-action on $\P^1$ given by  $\Psi(\lambda, [s:t])=[s:\lambda t]$ and set  $\psi_{\lambda}(-)=\Psi(\lambda,-)$.
Note that $\phi_{\lambda}$ preserves $X$ since $\phi_{\lambda}( S(z,\mu))=S(z,\psi_{\lambda}(\mu))$. To see this, let
$z=[z_0:...z_3]\in \P^3$ and $\mu=[s:t]\in \P^1$, then we have
$$
\begin{array}{ccl}
\Phi(\lambda, S([z_0:...z_3],[s:t]))
&=&\Phi(\lambda, [sz_0:...:sz_3:tz_0:...:tz_3]\\
&=&[sz_0:...:sz_3:\lambda tz_0:...:\lambda tz_3])\\
&=& S([z_0:...z_3],[s:\lambda t])\\
&=& S([z_0:...z_3],\psi_{\lambda}([s:t])).
\end{array}
$$

Hence $\Phi$ induces a $\C^*$ action on $X$ which is  denoted by $\Phi_{|X}$. The fixed point set $X^{\C^*}$ of $\Phi_{|X}$
is the intersection of $X$ and $(\P^7)^{\C^*}$, which is $(F_0\cap X)\cup (F_1\cap X)$. Note that both $F_0\cap X$ and $F_1\cap X$
are isomorphic to $Y$ and $(F_0\cap X)\cap (F_1\cap X)=\emptyset$.

Note that
\begin{equation}\label{eqn2}
\chi(\P^7)=8,
\end{equation}
while
\begin{equation}\label{eqn3}
\begin{array}{ccl}
\chi(X)
&=&\chi(Y\times \P^1)=2\chi(Y)\\
&=& 2d(6-4d+d^2).
\end{array}
\end{equation}

Therefore, if $d\geq 3$, then $\chi(X)\geq \chi(\P^7)$.
\end{proof}

Now we consider the relation of Euler numbers for a smooth complex projective  variety $\C^*$ equivariantly embedded
into another smooth complex projective  variety.
Let $X\subset M$ be a $\C^*$ equivariantly embedding. If $M$ is not the projective space, then the following
simple example shows that even if $X^{\C^*}$ is finite, one still has $\chi(X)>\chi(M)$.

\begin{example}Let $C$ be a smooth elliptic curve and let $M:=C\times\P^1$ be the product of $C$ and
$\P^1$.  Let $\mu:\C^*\times M\to M$ be the action given by
$$
\mu(\lambda, (c,[z_0:z_1]=(c,[z_0:\lambda z_1]).
$$
Fix a point $c_0\in C$ and let $X:=\P^1\hookrightarrow M$ be given by $[z_0:z_1]\mapsto (c_0,[z_0:z_1]$
Then $\P^1$ is $\C^*$ equivariantly embedding in $M$
and $\chi(M)=0<2=\chi(\P^1)$.
Moreover, the fixed point set of $\P^1$ contains two points
$[1:0]$ and $[0:1]$.
\end{example}

\section*{Acknowledgements}
I would like to thank Shouxin Dai and Baohua Fu for useful conversions after the first version of the manuscript. 
The project was partially sponsored by  STF of Sichuan province,China(2015JQ0007) and NSFC(11521061).

\end{document}